\renewcommand{\baselinestretch}{\baselinestretch}
\renewcommand{\baselinestretch}{1.1}
\numberwithin{equation}{section}
\newtheorem{thm}{Theorem}[section]
\newtheorem{lem}[thm]{Lemma}
\theoremstyle{definition}
\theoremstyle{remark}
\newtheorem{rmk}[thm]{Remark}
\newcommand{\Mod}[1]{\ (\mathrm{mod}\ #1)}
\begin{document}

\title[Ternary universal sums of generalized polygonal numbers]{Ternary universal sums of generalized\\ polygonal numbers}

\author{Jangwon Ju,  Byeong-Kweon Oh, and Bangnam Seo}

\address{Department of Mathematics, University of Ulsan, Ulsan 44610, Republic of Korea}
\email{jangwonju@ulsan.ac.kr}
\thanks{The work of the first author was supported by Basic Science Research Program through the National Research Foundation of Korea(NRF) funded by the Ministry of Education(NRF-2018R1A6A3A01012662).}

\address{Department of Mathematical Sciences and Research Institute of Mathematics, Seoul National University, Seoul 08826, Republic of Korea}
\email{bkoh@snu.ac.kr}
\thanks{The work of the second author was supported by the National Research Foundation of Korea (NRF-2017R1A2B4003758).}

\address{Department of Mathematical Sciences, Seoul National University, Seoul 08826, Republic of Korea}
\email{bangnam1@snu.ac.kr}

\subjclass[2010]{Primary 11E12, 11E20}

\keywords{generalized polygonal numbers, ternary universal sums}

\begin{abstract} An integer of the form $p_m(x)= \frac{(m-2)x^2-(m-4)x}{2} \ (m\ge 3)$, for some integer $x$ is called a generalized polygonal number of order $m$.  A ternary sum 
$\Phi_{i,j,k}^{a,b,c}(x,y,z)=ap_{i+2}(x)+bp_{j+2}(y)+cp_{k+2}(z)$  of generalized polygonal numbers,  for some positive integers $a,b,c$ and some integers $1\leq i\leq j \leq k$,  is said to be {\it universal over $\mathbb{Z}$} if the equation $\Phi_{i,j,k}^{a,b,c}(x,y,z)=n$ has an integer solution $x,y,z$ for any nonnegative integer $n$.  In this article, we prove the universalities of $17$ ternary sums  of generalized polygonal numbers, which was conjectured by Sun.  
\end{abstract}

\maketitle

\section{Introduction} 
 A polygonal number is a number represented as the number of dots and pebbles arranged in the shape of a regular polygon. More precisely,   {\it a polygonal number of order $m$} (or {\it an $m$-gonal number}) for $m\ge3$ is defined by
$$
p_m(x)= \frac{(m-2)x^2-(m-4)x}{2}
$$
for some nonnegative integer $x$. If we allow $x$ to be negative, then $p_m(x)$ is called  
{\it a generalized polygonal number of order $m$} (or {\it a generalized $m$-gonal number}). 
By definition, every polygonal number of order $m$ is a generalized polygonal number of order $m$. However, the converse is  true only when $m=3,4$. 

A famous assertion of Fermat states that every positive integer can be expressed as a  sum of three triangular numbers, which was proved by  Gauss in 1796. 
This follows from the Gauss-Legendre theorem, which states that every positive integer which is not of the form $4^k(8l+7)$ with  nonnegative integers $k$ and $l$, is a sum of three squares of integers.  

In general, a ternary sum 
$$
\Phi_{i,j,k}^{a,b,c}=ap_{i+2}(x)+bp_{j+2}(y)+cp_{k+2}(z) \ (a,b,c >0,~1\leq i\leq j\leq k)
$$ 
of polygonal numbers is called {\it universal over $\mathbb N$} if the diophantine equation
\begin{equation} \label{fund}
ap_{i+2}(x)+bp_{j+2}(y)+cp_{k+2}(z)=n
\end{equation}
has  a {\it nonnegative} integer solution $x,y,z$ for any nonnegative integer $n$. It is called {\it universal over $\mathbb{Z}$} if the above equation has an integer solution for any nonnegative integer $n$. 

In 1862, Liouville generalized Gauss's triangular theorem as follows:
 for positive integers $a,b,c \ (a \leq b \leq c)$, the ternary sum $ap_3(x)+bp_3(y)+cp_3(z)$ of triangular numbers is universal if and only if $(a,b,c)$ is one of  the following triples:
$$
\begin{array}{llllllll}
&(1,1,1), &(1,1,2), &(1,1,4), &(1,1,5), &(1,2,2), &(1,2,3), &(1,2,4).
\end{array}
$$
In \cite{gps}, \cite{os}, and \cite{s1}, Sun and his collaborators determined  all  ternary universal sums of the form $ap_3(x)+bp_3(y)+cp_4(z)$ or $ap_3(x)+bp_4(y)+cp_4(z)$.  There does not exist a universal sum of the form $ap_4(x)+bp_4(y)+cp_4(z)$ for  any positive integers $a,b,c$. 
Hence,  to find all ternary sums of polygonal numbers that are universal over $\mathbb N$, we may assume that $\{i,j,k\} \ne \{3,4\}$. Recently, in \cite{s2} Sun gave a complete list of $95$  candidates of  ternary  sums of polygonal numbers that are universal over $\mathbb N$ under this assumption. 

In fact, the proof of the universality over $\mathbb{Z}$ of a ternary sum of polygonal numbers is much easier than the proof of universality over $\mathbb N$.  For this reason, one may naturally ask the universalities over $\mathbb{Z}$ of $95$ candidates of ternary sums.  Related with this question, Sun and Ge proved  in \cite{fz} and \cite{s2} that $62$ ternary sums among $95$ candidates are, in fact, universal over $\mathbb{Z}$. 
The remaining $33=95-62$ candidates are  as follows:
\begin{equation} \label{list}
{\setlength\arraycolsep{2pt}
\begin{array}{lllll}
&{\bf p_3+9p_3+p_5},&{\bf p_3+2p_4+4p_5},&{\bf p_3+4p_4+2p_5},&{\bf p_3+2p_3+p_7},\\
&{\bf p_3+p_4+2p_7},&{\bf p_3+p_5+p_7},&p_3+p_5+4p_7,&{\bf p_3+2p_5+p_7},\\
&{\bf p_3+p_7+2p_7},&{\bf p_3+2p_3+2p_8},&{\bf p_3+p_7+p_8},&{\bf p_3+2p_3+p_9},\\
&p_3+2p_3+2p_9,&{\bf p_3+p_4+p_9},&p_3+p_4+2p_9,&p_3+2p_4+p_9,\\
&{\bf p_3+p_5+2p_9},&{\bf 2p_3+p_5+p_9},&{\bf p_3+p_3+p_{12}},&{\bf p_3+2p_3+p_{12}},\\
&p_3+2p_3+2p_{12},&p_3+p_4+p_{13},&{\bf p_3+p_5+p_{13}},&p_3+2p_3+p_{15},\\
&p_3+p_4+p_{15},&p_3+2p_3+p_{16},&p_3+p_3+p_{17},&p_3+2p_3+p_{17},\\
&p_3+p_4+p_{17},&p_3+2p_4+p_{17},&p_3+p_4+p_{18},&p_3+2p_3+p_{23},\\
&p_3+p_4+p_{27}.\\
\end{array}}
\end{equation}
In this paper, we prove that  $17$  ternary sums written in boldface in the above list of  remaining candidates are universal over $\mathbb Z$.  In most cases, we use the method developed in \cite{regular} and
\cite{pentagonal}.  For those who are  unfamiliar with it, we briefly review this method in Section 2.  

Let 
$$
f(x_1,x_2,\dots,x_n)=\sum_{1 \le i, j\le n} a_{ij} x_ix_j \ (a_{ij}=a_{ji})
$$
 be a positive definite integral quadratic form. 
The corresponding symmetric matrix of $f$ is defined by $M_f=(a_{ij})$. 
For a diagonal quadratic form 
$f(x_1,x_2,\dots,x_n)=a_1x_1^2+a_2x_2^2+\cdots+a_nx_n^2$, we simply write 
$$
M_f=\langle a_1,a_2,\dots,a_n\rangle.
$$ 
The discriminant of $f$ is defined by the determinant of $M_f$, and denoted by $df$. 
The genus of $f$ is the set of all quadratic forms that are locally isometric to $f$ over $\mathbb{Z}_p$ for any prime $p$, which is denoted by $\text{gen}(f)$.  
The subset of  $\text{gen}(f)$ consisting of all quadratic forms that are isometric to  $f$ itself is denoted by $[f]$.  
The number of isometry classes in $\text{gen}(f)$ is called the class number of $f$, and denoted by $h(f)$.
An integer $a$ is called {\it an eligible integer of $f$} if it is locally represented by $f$ over $\mathbb{Z}_p$ for any prime $p$. 
Note that any eligible integer of $f$ is represented by a quadratic form in the genus of $f$ (for details, see 102:5 of \cite{om}).    
The set of all eligible integers of $f$ is denoted by $Q(\text{gen}(f))$.  Similarly, $Q(f)$ denotes the set of all integers that are represented by $f$ itself.   
For an integer $a$, $R(a,f)$ denotes the set of all vectors $x\in \mathbb{Z}^n$ such that $f(x)=a$, and $r(a,f)$ denotes its cardinality. For a set $S$, we define $\pm S=\{ s : s \in S \ \text{or} \ -s \in S\}$. For two vectors $v=(v_1,v_2,\dots,v_n)$, $w=(w_1,w_2,\dots,w_n) \in \mathbb{Z}^n$ and a positive integer $s$, if $v_i\equiv w_i \Mod{s}$ for any $i=1,2,\dots,n$, then we write $v\equiv w \Mod{s}$.

Any unexplained notations and terminologies can be found in \cite{ki} or \cite{om}.

\section{General tools}
Let $i,j$, and  $k$ be integers with $1 \le i \le j \le k$. Recall that for positive integers $a,b$, and $c$, a ternary sum 
$$
\Phi_{i,j,k}^{a,b,c}(x,y,z)=ap_{i+2}(x)+bp_{j+2}(y)+cp_{k+2}(z)
$$
 of generalized polygonal numbers is universal  over $\mathbb{Z}$ if  $\Phi_{i,j,k}^{a,b,c}(x,y,z)=n$ 
has an integer solution for any nonnegative integer $n$.  Since any generalized hexagonal number is a triangular number and vice versa, we always assume that neither $i,j$ nor $k$  is $4$. 
Note that $\Phi_{i,j,k}^{a,b,c}$ is universal over $\mathbb{Z}$ if and only if the equation
\begin{equation} \label{im}
\begin{array} {rl}
&jka(2ix-(i-2))^2+ikb(2jy-(j-2))^2+ijc(2kz-(k-2))^2\\
&\hskip 1cm =8ijkn+jka(i-2)^2+ikb(j-2)^2+ijc(k-2)^2=: \Psi_{i,j,k}^{a,b,c}(n)
\end{array}
\end{equation}
has an integer solution for any nonnegative integer $n$.  This is equivalent to the existence of an integer solution $X,Y,Z$ of the diophantine equation
\begin{equation}\label{cong}
jkaX^2+ikbY^2+ijcZ^2=\Psi_{i,j,k}^{a,b,c}(n)
\end{equation}
 satisfying the following congruence conditions
 \begin{equation} \label{cong2}
X\equiv -i+2 \Mod{2i},~  Y\equiv -j+2 \Mod{2j},  \text{ and } Z\equiv -k+2 \Mod{2k}.
\end{equation} 

 In some particular cases, representations of quadratic forms with  some congruence conditions correspond to representations of a subform which is taken suitably (for details, see \cite{jko}).  The following lemma reveals this correspondence more concretely in our case.

\begin{lem}  \label{aaa}  Assume that $i,j,k$ and $a,b,c$ given above satisfy the followings:
\begin{itemize}
\item [(i)] $\{i,j,k\} \subset \mathcal S_P=\{ 1, 2, p^n, 2p^n : \text{$p$ is an odd prime and $n\ge1$}\}$;
\item [(ii)] all of $\text{gcd}(i,jka)$, $\text{gcd}(j,ikb)$, and $\text{gcd}(k,ijc)$ are either $1$ or $2$;
\item [(iii)] either $i,j,k$ are all even or some of them are odd and 
$$
\Psi_{i,j,k}^{a,b,c}(n) \not \equiv 0 \Mod{2^{3+\text{ord}_2(ijk)}}.
$$
\end{itemize}
Then there is a quadratic form $Q(X,Y,Z)$ such that for any integer $n$, $Q(X,Y,Z)=\Psi_{i,j,k}^{a,b,c}(n)$ has an integer solution if and only if Eq. \eqref{cong} has an integer solution satisfying \eqref{cong2}.
\end{lem}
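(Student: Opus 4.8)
The plan is to interpret the congruence-constrained representation in (\ref{cong})--(\ref{cong2}) as an ordinary representation by a single ternary lattice, following the correspondence between representations with congruence conditions and representations by a suitable subform. Write $f(X,Y,Z)=jkaX^2+ikbY^2+ijcZ^2$ and let $L=\langle jka,\,ikb,\,ijc\rangle=\z e_1\oplus\z e_2\oplus\z e_3$ be the diagonal lattice whose norm form is $f$. Then (\ref{cong}) together with (\ref{cong2}) says exactly that $\Psi_{i,j,k}^{a,b,c}(n)$ is a value of $f$ on the coset
$$
v_0+\ell,\qquad v_0=(2-i)e_1+(2-j)e_2+(2-k)e_3,\quad \ell=2i\,\z e_1\oplus 2j\,\z e_2\oplus 2k\,\z e_3 .
$$
A useful first reduction is that, as $n$ runs over the non negative integers, $\Psi_{i,j,k}^{a,b,c}(n)$ runs over a single residue class modulo $8ijk$; hence I only need a form $Q$ for which the equivalence ``$N\in Q(\z^3)$ $\iff$ $N=f(v)$ for some $v\in v_0+\ell$'' holds for $N$ in that fixed class, not for all integers $N$. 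This is what makes it plausible that the coset condition can be absorbed into a single lattice.

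Since $L$ and $\ell$ are diagonal, the three coordinate conditions decouple, and the coset constraint is vacuous at every prime $p\nmid 2ijk$; so it suffices to analyse each congruence $p$-adically at the primes dividing the corresponding modulus and then reassemble. Here hypothesis (i) is what keeps this local analysis tractable: each of $i,j,k$ lies in $\mathcal S_P$, so its modulus $2i$ (resp. $2j$, $2k$) is supported on $2$ and at most one odd prime. At an odd prime $p$ dividing, say, $i$, hypothesis (ii) forces $p\nmid jka$ (the gcd $(i,jka)$ being $1$ or $2$), so the leading coefficient is a $p$-adic unit, and the active condition is $X\equiv 2\pmod{p^n}$. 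Writing $X=2(1+p^n t)$ and using that squaring is an automorphism of $1+p^n\z_p$ for odd $p$, the constrained squares fill $4(1+p^n\z_p)$; comparing this, on the fixed residue class of $\Psi_{i,j,k}^{a,b,c}(n)$, with the unconstrained $p$-adic values shows that constrained $p$-adic solvability coincides with solvability for an explicitly determined $\z_p$-lattice $Q_p$.

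The genuinely delicate step is $p=2$, and this is where hypothesis (iii) is used. When $i,j,k$ are all even the parity of the coordinates is forced and the $2$-adic coset is benign. Otherwise some index is odd, the coset $v_0+\ell$ is no longer symmetric under sign changes, and one must rule out the degenerate $2$-adic square classes; the hypothesis $\Psi_{i,j,k}^{a,b,c}(n)\not\equiv 0\pmod{2^{3+\ord_2(ijk)}}$ does exactly this, confining the target to the $2$-adic classes on which the constrained value set again matches that of a single $\z_2$-lattice $Q_2$. I expect this $2$-adic bookkeeping, together with checking that the correspondence does not silently change the residue class being represented, to be the main obstacle of the proof.

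Finally I would patch the local data into a global lattice $Q$: outside $2ijk$ one has $Q_p\cong L_p$, and the finitely many modified localizations $Q_p$ for $p\mid 2ijk$ pin down a ternary $\z$-lattice $Q$. It then remains to upgrade the prime-by-prime agreement of $p$-adic solvability (on the progression of $\Psi_{i,j,k}^{a,b,c}(n)$) to the asserted integral equivalence. This last passage is precisely the congruence-to-subform correspondence of \cite{jko}, whose hypotheses are exactly the conditions (i)--(iii) arranged above and which furnishes an explicit identification of the two families of solutions rather than a mere comparison of local densities; carrying it out in the present diagonal setting completes the proof.
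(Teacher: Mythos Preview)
Your outline has the right intuitions about where each hypothesis enters, but it never actually constructs $Q$, and the decisive step is deferred wholesale to \cite{jko}. That is the gap: matching $p$-adic value sets for each prime does not by itself yield the asserted equivalence of \emph{integral} solutions, and you acknowledge this by invoking an external ``congruence-to-subform'' theorem whose hypotheses you do not verify here. Moreover, your local lattices $Q_p$ are never written down, so the ``patching'' is a promise rather than an argument.

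The paper's proof is far more direct and entirely global. It writes $Q$ down explicitly in four cases according to the parities of $i,j,k$: if all are even, $Q=16f$; if exactly $i$ is odd, $Q=jkaX^2+16(ikbY^2+ijcZ^2)$; if exactly $j,k$ are odd, $Q=16jkaX^2+ikbY^2+ijc(Y-4Z)^2$; if all are odd, $Q=jkaX^2+ikb(X-4Y)^2+ijc(X-4Z)^2$. The verification rests on a single elementary fact (which is what your odd-prime analysis is really pointing at): for $s\in\mathcal S_P$ and $(a,s)\le 2$, the congruence $x^2\equiv a^2\pmod s$ has only the solutions $x\equiv\pm a$. Given a solution of $Q=\Psi$, one reads off from the residue of $\Psi$ that each transformed coordinate squared is congruent to $(i-2)^2$, $(j-2)^2$, $(k-2)^2$ modulo the appropriate $2i,2j,2k$, hence equals $\pm(i-2)$ etc., and choosing signs lands in the coset (\ref{cong2}). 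Hypothesis (iii) is used only at $p=2$ when some index is odd, to force the relevant coordinates to be odd; the mixed substitutions $Y-4Z$, $X-4Y$, $X-4Z$ are exactly what encode the $\bmod\ 4$ compatibility among odd coordinates. No local--global passage or appeal to \cite{jko} is needed.
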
 
\begin{proof}  Note that for any $s \in \mathcal S_P$ and any integer $a$ with $\text{gcd}(a,s)=1$, the congruence equation $x^2\equiv a^2 \Mod{s}$ has at most two incongruent integer solutions modulo $s$. 
 
 First, assume that  all of $i,j$, and $k$ are even. From the assumption, we have $i \equiv j\equiv k \equiv 2 \Mod{4}$.  Define
$$
Q(X,Y,Z)=16(jkaX^2+ikbY^2+ijcZ^2).
$$
Assume that  $(u,v,w) \in \mathbb{Z}^3$ is an integer solution of $Q(X,Y,Z)=\Psi_{i,j,k}^{a,b,c}(n)$. Then 
$$
16jkau^2 \equiv  jka(i-2)^2 \Mod{32i}.  
$$ 
From this and the conditions (i) and (ii) follows $4u \equiv \pm(i-2) \Mod{2i}$. Similarly,  we have $4v \equiv \pm(j-2) \Mod{2j}$ and $4k \equiv \pm(k-2) \Mod{2k}$.  Therefore by choosing  the signs  suitably, 
we may find an integer solution of Eq. \eqref{cong} satisfying \eqref{cong2}.  For the converse, note that any integers $X,Y,Z$ satisfying the condition \eqref{cong2} are divisible by $4$.

Assume that exactly one of $i,j,k$, say $i$, is odd.  Define 
$$
Q(X,Y,Z)=jkaX^2+16(ikbY^2+ijcZ^2).
$$
Assume that  $(u,v,w) \in \mathbb{Z}^3$ is an integer solution of $Q(X,Y,Z)=\Psi_{i,j,k}^{a,b,c}(n)$.
Since 
$$
16ikbv^2 \equiv ikb(j-2)^2 \Mod{j} \quad \text{and} \quad 16ijcw^2\equiv ijc(k-2)^2 \Mod{k},
$$
and  $\text{gcd}(j, ikb)=\text{gcd}(k,ijc)=2$ from (ii), we have 
$$
16v^2\equiv (j-2)^2 \ \left(\text{mod } {\frac j2}\right) \quad \text{and}\quad 16w^2\equiv (k-2)^2 \ \left(\text{mod } {\frac k2}\right),
$$   
which implies that $4v \equiv \pm(j-2) \Mod{2j}$ and $4k \equiv \pm(k-2) \Mod{2k}$. Now since $jkau^2 \equiv jka(i-2)^2 \Mod{32i}$ and $a \not \equiv 0 \Mod{8}$ by the condition (iii), we have $u^2 \equiv (i-2)^2 \Mod{2i}$.  Hence $u \equiv \pm (i-2) \Mod{2i}$. Therefore by choosing the signs suitably, we may find an integer solution of \eqref{cong} satisfying \eqref{cong2}.

Assume that exactly two of $i,j,k$, say $j,k$, are odd.  Define 
$$
Q(X,Y,Z)=16jkaX^2+ikbY^2+ijc(Y-4Z)^2.
$$
Assume that  $(u,v,w) \in \mathbb{Z}^3$ is an integer solution of $Q(X,Y,Z)=\Psi_{i,j,k}^{a,b,c}(n)$.
Then, similarly to the above, we may easily show that
$$
4u\equiv \pm(i-2) \ \left(\text{mod } {\frac i2}\right), \ v\equiv \pm(j-2) \Mod{j}, \  v-4w \equiv \pm(k-2)\Mod{k}.
$$ 
Since $i \equiv 2\Mod{4}$, we have $4u\equiv \pm(i-2) \Mod{2i}$. Furthermore, since
$$
ikbv^2+ijc(v-4w)^2\equiv ikb+ijc  \not \equiv 0  \Mod{16},
$$
$v$ is odd. Hence 
$$
v\equiv \pm(j-2) \Mod{2j}, \  v-4w \equiv \pm(k-2) \Mod{2k}.
$$
Therefore by choosing the signs suitably, we may find  an integer solution of \eqref{cong} satisfying \eqref{cong2}.  Conversely, if Eq. \eqref{cong} satisfying \eqref{cong2} has an integer solution $X,Y,Z$, then $X$ is divisible by $4$ and both $Y$ and $Z$ are odd. Hence $Y\equiv Z\Mod{4}$ or $Y \equiv-Z \Mod{4}$. This implies that  $Q(X,Y,Z)=\Psi_{i,j,k}^{a,b,c}(n)$ has an integer solution.
 
Finally, if all of $i,j,k$ are odd, then we define
$$
Q(X,Y,Z)=jkaX^2+ikb(X-4Y)^2+ijc(X-4Z)^2.
$$
The proof of this case is quite similar to the above. 
\end{proof}

\begin{rmk} \label{goodcase}  Among $33$ candidates given in the introduction, those which do not satisfy the conditions in Lemma \ref{aaa} are the cases
$$
4, \ 7, \ 9,\ 14, \ 16, \ 24,\ \text{and} \ 27\sim 32.
$$
For $4$-th, $9$-th, and $14$-th cases, we prove their universalities by using some other methods. For details, see Section 3. 
\end{rmk}

In \cite{regular} and \cite{pentagonal}, we developed a method on determining whether or not integers in an arithmetic progression are represented by some particular ternary quadratic form. We briefly introduce this method for those who are unfamiliar with it. 
 
 Let $d$ be a positive integer and let $a$ be a nonnegative integer $(a\leq d)$. We define 
$$
S_{d,a}=\{dn+a \mid n \in \mathbb N \cup \{0\}\}.
$$
For integral ternary quadratic forms $f,g$, we define
$$
R(g,d,a)=\{v \in (\mathbb{Z}/d\mathbb{Z})^3 \mid vM_gv^t\equiv a\Mod{d} \}
$$
and
$$
R(f,g,d)=\{T\in M_3(\mathbb{Z}) \mid  T^tM_fT=d^2M_g \}.
$$
A coset (or, a vector in the coset) $v \in R(g,d,a)$ is said to be {\it good} with respect to $f,g,d, \text{ and }a$ if there is a $T\in R(f,g,d)$ such that $\frac1d \cdot vT^t \in \mathbb{Z}^3$.  The set of all good vectors in $R(g,d,a)$ is denoted by $R_f(g,d,a)$. Every vector in $R(g,d,a)- R_f(g,d,a)$ is said to be {\it bad}.   
If there does not exist a bad vector, we write  
$$
g\prec_{d,a} f.
$$
If $g\prec_{d,a} f$, then by Lemma 2.2 of \cite{regular},  we have 
$$
S_{d,a}\cap Q(g) \subset Q(f).
$$
Note that the converse is not true in general.

In general, if $d$ is large, then it is not easy to compute the set  $R(g,d,a)- R_f(g,d,a)$ of bad vectors exactly by hand.  A MAPLE based  computer program for this set  is available upon request to the authors.

\begin{thm}\label{maintool}
Under the same notation given above, assume that there is a partition $R(g,d,a)-R_f(g,d,a)=(P_1\cup \cdots \cup P_k) \cup (\widetilde{P_1}\cup \cdots \cup \widetilde{P_{k'}})$ satisfying the following properties: for each $P_i$, there is a  $T_i\in M_3(\mathbb Z)$ such that
\begin{enumerate}
\item[(i)] $\frac{1}{d}T_i$ has an infinite order;
\item[(ii)] $T_i^tM_gT_i=d^2M_g$;
\item[(iii)]  for any vector $v \in \mathbb Z^3$ such that  $v\Mod{d}\in P_i$, \ $\frac1d\cdot vT_i^t\in \mathbb Z^3$ \ and $\frac1d\cdot vT_i^t\Mod{d} \in P_i \cup R_f(g,d,a)$, 
\end{enumerate}
and for each $\widetilde{P_j}$, there is a $\widetilde{T_{j}}\in M_3(\mathbb Z)$ such that
\begin{enumerate}
\item[(iv)] $\widetilde{T_{j}}^t M_g \widetilde{T_{j}}=d^2 M_g$;
\item[(v)]  for any vector $v \in \mathbb{Z}^3$  such that $v\Mod{d} \in \widetilde{P_j}$, \ $\frac1d\cdot v\widetilde{T_{j}}^t \in \mathbb{Z}^3$ \ and  $\frac1d\cdot v\widetilde{T_{j}}^t \Mod{d} \in P_1\cup\cdots\cup P_k \cup R_f(g,d,a)$. 
\end{enumerate}
Then we have
$$
(S_{d,a} \cap Q(g)) - \bigcup_{i=1}^{k} g(z_i)\mathcal S\subset Q(f),
$$
where the vector $z_i$ is any integral primitive eigenvector of $T_i$,  $\mathcal S$ is the set of squares of integers, and $g(z_i)\mathcal S=\{g(z_i)n^2\mid n\in \mathbb{Z}\}$. 
\end{thm}

\begin{proof}
From the conditions (iv) and (v), we may assume that 
$$
R(g,d,a)-R_f(g,d,a)=P_1\cup \cdots \cup P_k.
$$ 
Hence the theorem follows directly from Corollary 2.2 in \cite{pentagonal}.
\end{proof}

\begin{lem}  \label{spinor}  Let $f$ be a integral ternary quadratic form with class number greater than $1$. Assume that the genus of $f$ contains only one spinor genus. 
If an integer $a$ is represented by all isometric classes in the genus of $f$ except at most only one isometric class, then for any positive integer $s>1$ with $\text{gcd}(s,2df)=1$, $as^2$ is represented by all quadratic forms in the genus of $f$.  
\end{lem}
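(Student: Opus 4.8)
The plan is to read the statement through the lens of the standard theory of spinor genera of ternary lattices (as in \cite{om}), isolating the single point that actually requires an argument. First I would dispose of the easy part, namely \emph{eligibility}. Since the class number of $L$ exceeds $1$ and $a$ is represented by all but at most one class, $a$ is represented by at least one lattice in $\gen(L)$, so $a$ is locally represented everywhere, i.e. $a\in Q(\gen(L))$. Consequently $as^2$ is eligible for a trivial reason: if $v_p\in L_p$ satisfies $Q(v_p)=a$, then $s v_p\in L_p$ satisfies $Q(sv_p)=as^2$, so $as^2\in Q(L_p)$ for every $p$; no coprimality is needed here. Because $\gen(L)$ consists of a single spinor genus we have $\gen(L)=\spn(L)$, and hence $as^2\in Q(\spn(L))$. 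Thus the whole content of the lemma is to upgrade representation by the spinor genus to representation by each individual isometry class, and the hypothesis $(s,dL)=1$ is used \emph{only} in this upgrade.

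For the upgrade I would fix an arbitrary target class $K\in\gen(L)$ and appeal to the Schulze--Pillot description of which members of a spinor genus represent a given integer: the set of such classes is controlled by the local relative spinor-norm groups $\theta^{*}(K_p,n)\subset\mathbb{Q}_p^{*}/\mathbb{Q}_p^{*2}$ together with the ambient spinor class group $\Sigma$. The single-spinor-genus hypothesis gives $\Sigma=1$, so there is no spinor-genus-level obstruction; what remains is a purely local computation of the $\theta^{*}(K_p,as^2)$. The key observation is that, at each prime $p\mid s$, one may search for a representing vector divisible by $p^{\ord_p(s)}$, and this enlarges $\theta^{*}(K_p,as^2)$ to contain a nonsquare (a uniformizer class), which is exactly the local flexibility that a plain representation of $a$ does not possess. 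Assembling these enlarged local groups adelically and invoking strong approximation for the rank-one group $\mathrm{Spin}(V)$ then produces a genuine global vector of norm $as^2$ lying in the prescribed class $K$; running this over all $K$ gives the conclusion.

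The hard part will be the spinor-norm bookkeeping that makes this uniform in $s$. Concretely, I expect the classes representing $a$ to form a single coset of an index-at-most-$2$ subgroup of the group determined by the $\theta^{*}(L_p,a)$ (this is precisely the ``all but at most one class'' regime asserted by the hypothesis), and the real task is to show that multiplying by $s^2$ for \emph{any} $s>1$ with $(s,dL)=1$ enlarges this subgroup to the full group, thereby merging the single possibly-missing coset into the representing set. Verifying that the uniformizer classes contributed at the primes $p\mid s$ suffice to trivialize that index-$2$ defect for every admissible $s$, rather than only for $s$ lying in some congruence class, is where I anticipate the genuine work, and it is exactly the step at which the two hypotheses --- a single spinor genus and $(s,dL)=1$ --- are used together.
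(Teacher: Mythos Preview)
Your proposal has a genuine gap at the decisive step. The Schulze--Pillot machinery of relative spinor norm groups $\theta^{*}(L_p,n)$ together with strong approximation for $\mathrm{Spin}(V)$ governs which \emph{spinor genera} in $\gen(L)$ represent $n$, not which individual \emph{classes} within a fixed spinor genus do so. Since the hypothesis already forces $\gen(L)=\spn(L)$, that entire layer of the theory is vacuous here and cannot separate the one possibly-missing class from the others. Your heuristic that ``the classes representing $a$ form a single coset of an index-at-most-$2$ subgroup'' is the spinor-genus dichotomy transplanted to the wrong level: the isometry classes inside a spinor genus of a ternary lattice carry no natural group structure, and there is no ``exactly half the classes represent $a$'' principle. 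Indeed, in the applications later in the paper the class number is $3$ with two classes representing $a$ and one not, a configuration incompatible with any index-$2$ coset picture. Enlarging local $\theta^{*}$-groups by allowing $p$-divisible vectors again only moves one between spinor genera, which buys nothing when there is a single spinor genus.

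The paper's argument rests on a different and much more direct idea: the $p$-neighbor graph $Z(p,L)$ of Benham and Hsia \cite{bh}. For any prime $p\nmid dL$, the connected components of this graph on $\gen(L)$ are precisely the spinor genera, so under the hypothesis the graph is connected. Hence the exceptional class $M$ has a $p$-neighbor $M'$ lying in some other class, and $M'$ represents $a$. The neighbor relation gives $pM'\subset M$; so if $v\in M'$ satisfies $Q(v)=a$ then $pv\in M$ with $Q(pv)=ap^{2}$, and $M$ represents $ap^{2}$. Scaling then handles any $s>1$ with $(s,dL)=1$. This neighbor step is the class-level mechanism your spinor-norm outline does not supply.
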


\begin{proof} Without loss of generality, we may assume that there is a quadratic form, say $g$, that does not represent $a$. For any prime $p$ not dividing $2df$, the graph $Z(p,f)$ defined in \cite{bh} is connected. Hence there is a quadratic form $g'$ such that $g'$ is adjacent to $g$ and $a$ is represented by $g'$. Since $p^2g'$ is represented by $g$, $ap^2$ is represented by $g$. This completes the proof.   \end{proof}

\section{Ternary universal sums of polygonal numbers}

In this section, we prove that 17 ternary sums of generalized polygonal numbers among 33 candidates are, in fact,  universal over $\mathbb Z$. For the list of all candidates, see \eqref{list} in the introduction.

In the case (i), we will explain how our method works in detail. 
Since everything is quite similar to this for all the other cases, we briefly provide all parameters needed for computations, in the remaining cases.
\\
\\ 
\textbf{(i)} $p_3(x)+9p_3(y)+p_5(z)$. We have to show that for any nonnegative integer $n$, the diophantine equation
$$
3(2x-1)^2+27(2y-1)^2+(6z-1)^2=24n+31
$$
has an integer solution $x,y,z$ by \eqref{im}, which is equivalent to the diophantine equation
\begin{equation}\label{i1}
X^2+3Y^2+27Z^2=24n+31
\end{equation}
has an integer solution $X,Y,Z$ such that $X\equiv Y\equiv Z\equiv1\Mod{2}$ and $X\not\equiv0\Mod{3}$ for any nonnegative integer $n$.
Note that $X\not\equiv0\Mod{3}$ if $X,Y,Z$ is an integer solution of Eq. \eqref{i1}. 
Furthermore, if there is an integer solution $X,Y,Z$ such that $X\equiv Y\Mod{4}$, then $X\equiv Y\equiv Z\equiv1\Mod{2}$.
Therefore, it is enough to show that the equation
\begin{equation}\label{i2}
(X+3Y)^2+3(X-Y)^2+27Z^2=4X^2+12Y^2+27Z^2=24n+31.
\end{equation}
has an integer solution $X,Y,Z$ for any nonnegative integer $n$.  

Let $f$ be a ternary quadratic form such that $M_f=\langle 4,12, 27\rangle$. Since the class number of $f$ is four, it is hard to determine whether or not every positive integer of the form $24n+31$ is represented by $f$. To apply Theorem \ref{maintool} to our situation, we need a ternary quadratic form $g$ such that 
 \begin{itemize} 
 \item [(i)] $g$ represents all integers of the form  $24n+31$ for a nonnegative integer $n$;
 \item [(ii)] $24^2g$ is represented by $f$.
 \end{itemize}
One may   easily show that a quadratic form $g$ such that $M_g=\langle 3,4,12\rangle$ satisfies all conditions given above. Note that the class number of $g$ is one.  One may check that there are exactly $2304$ vectors in $R(g,24,7)$ and $336$ matrices in $R(f,g,24)$. Furthermore, all vectors in $R(g,24,7)$ are good vectors with respect to $f,g,24$, and $7$ except the following $128$ vectors:
$$
\aligned
\quad\quad\quad R(g,24,7)-R_f(g,24,7)=&\{(v_1,v_2,v_3)\in (\mathbb{Z}/24\mathbb{Z})^3 \mid v_1\equiv1,5\Mod6,\\
&v_3\equiv 0\Mod6, v_1\equiv\pm(v_2+v_3) \Mod{12}\}.
\endaligned
$$
Now, define
$$
\aligned
&\quad\quad\quad P_1=\{(v_1,v_2,v_3)\in R(g,24,7)-R_f(g,24,7) \mid v_1\equiv -(v_2+v_3)\Mod{12}\},\\
&\quad\quad\quad \widetilde{P_1}=\{(v_1,v_2,v_3)\in R(g,24,7)-R_f(g,24,7) \mid v_1\equiv v_2+v_3\Mod{12}\},
\endaligned
$$
\noindent and 
$$
T_1=\begin{pmatrix}0&24&24\\18&-6&18\\-6&-6&18\end{pmatrix},\quad \widetilde{T_1}=\begin{pmatrix}0&-24&-24\\18&6&-18\\-6&6&-18\end{pmatrix}.
$$
Then one may easily show that all conditions in Theorem \ref{maintool} are satisfied.
All computations were done by a computer program based on MAPLE.

Assume that $g(v)=24n+31$ for some $v\in\mathbb{Z}^3$. 
We show that $f$ also represents $24n+31$.
If $v\Mod{24}$ is a good vector with respect to $f,g,24$, and $7$, then there is a $T\in R(f,g,24)$ such that
$$
\frac{1}{24}\cdot vT^t\in\mathbb{Z}^3~\text{and}~f\left(\frac{1}{24}\cdot vT^t\right)=24n+31.
$$
As a sample, for $(1,25,2)\in\mathbb{Z}^3$, note that $(1,25,2)\equiv(1,1,2)\Mod{24}\in R_f(g,24,7)$. 
In fact, there is a $T=\begin{pmatrix}9&21&9\\9&-3&-15\\4&-4&12\end{pmatrix}\in R(f,g,24)$ such that
$$
f\left(\frac{1}{24}\cdot(1,25,2)\cdot T^t\right)=f(23,-4,-3)=g(1,25,2)= 2551.
$$
Assume that $v\Mod{24}$ is a bad vector with respect to $f,g,24$, and $7$.
If $v\Mod{24}\in\widetilde{P_1}$, then by (v) of Theorem \ref{maintool}, we know that
$$
\frac{1}{24}\cdot v\widetilde{T_1}^t\in\mathbb{Z}^3~\text{and}~\frac{1}{24}\cdot v\widetilde{T_1}^t\Mod{24}\in P_1\cup R_f(g,24,7).
$$
Hence we may assume that $v\Mod{24}\in P_1$. Then by (iii) of Theorem \ref{maintool}, we know  that
$$
\frac{1}{24}\cdot vT_1^t\in\mathbb{Z}^3~\text{and}~\frac{1}{24}\cdot vT_1^t\Mod{24}\in P_1\cup R_f(g,24,7).
$$
If $\frac{1}{24}\cdot vT_1^t\Mod{24}\in R_f(g,24,7)$, then we are done. 
Assume $\frac{1}{24}\cdot vT_1^t\Mod{24}\in P_1$. 
Then by (iii) of Theorem \ref{maintool}, we directly know that 
$$
\left(\frac{1}{24}\right)^2\cdot v(T_1^t)^2\in\mathbb{Z}^3~\text{and}~\left(\frac{1}{24}\right)^2\cdot v(T_1^t)^2\Mod{24}\in P_1\cup R_f(g,24,7).
$$
Now, inductively, either there is a positive integer $m$ such that
$$
\left(\frac{1}{24}\right)^m\cdot v\left(T_1^t\right)^m\in\mathbb{Z}^3~\text{and}~\left(\frac{1}{24}\right)^m\cdot v\left(T_1^t\right)^m\Mod{24}\in R_f(g,24,7),
$$ 
or for any positive integer $m$,
$$
\left(\frac{1}{24}\right)^m\cdot v\left(T_1^t\right)^m\in\mathbb{Z}^3~\text{and}~\left(\frac{1}{24}\right)^m\cdot v\left(T_1^t\right)^m\Mod{24}\in P_1.
$$
Since there are only finitely many integer solution of Eq. \eqref{i2} and $T_1$ has an infinite order, the latter is impossible unless $v$ is an eigenvector of $T_1$.
Note that $z_1=(1,-1,0)$ is an integral primitive eigenvector of $T_1$.  Hence if $24n+31$ is not of the form $Q(\pm t,\mp t, 0)=7t^2$ for some positive integer $t$, then it is also represented by $f$.  Assume that $24n+31=7t^2$ for a positive integer $t$. Then there is a prime $p>3$ dividing $t$. Since the class number of $g$ is one, we have
$$
r(7p^2,g)=4p+4-4\cdot\left(\frac{-7}{p}\right)>4
$$
by the Minkowski-Siegel formula (for details, see \cite{ya}). Note that 
$$
\frac{1}{24}\cdot(-1,-1,0)\cdot\widetilde{T_1}^t=(1,-1,0).
$$  
Hence there is a representation of $24n+31$ by $g$ which is neither contained in the eigenspace   of $T_1$ nor the inverse image of the eigenspace by $\frac{1}{24}\cdot\widetilde{T_1}^t$. Therefore,  
every integer of the form $24n+31$ for a nonnegative integer $n$  is represented by $f$. 
This completes the proof. 
\\
\\
\textbf{(ii)} $p_3(x)+2p_4(y)+4p_5(z)$. Let $f$ and $g$ be integral quadratic forms such that 
$$
M_f=\langle3, 4, 48\rangle \quad \text{and} \quad M_g=\langle3, 4, 12\rangle.
$$
Since $S_{24,7} \subset Q(\text{gen}(g))$ and $h(g)=1$, we have $S_{24,7} \subset Q(g)$. One may easily show that
$$
R(g,4,3)-R_f(g,4,3)=\pm\{(1,0,1),(1,2,3),(1,0,3),(1,2,1)\}.
$$
Let $P_1=\pm\{(1,0,1),(1,2,3)\}$, $\widetilde{P_1}=\pm\{(1,0,3),(1,2,1)\}$ and 
$$
T_1=\begin{pmatrix}0&-4&-4\\3&1&-3\\-1&1&-3\end{pmatrix},\quad \widetilde{T_{1}}=\begin{pmatrix}0&4&4\\3&-1&3\\-1&-1&3\end{pmatrix}.
$$
Then one may easily show that all conditions in Theorem \ref{maintool} are satisfied. 
Note that $z_1=(1,0,1)$ is an integral primitive eigenvector of $T_1$  and $Q(z_1)=15$, which is not of the form $24n+7$.    
Therefore we have $S_{24,7}\subset Q(f)$. This implies that the equation $3X^2 +4Y^2+48Z^2=24n+7$
has an integer solution $X,Y,Z$ for any nonnegative integer $n$. Since $X$ is odd and $Y$ is relatively prime to $6$, the equation
$$
3(2x+1)^2+48y^2+4(6z-1)^2=24n+7
$$ 
has an integer solution for any nonnegative integer $n$. Hence by  \eqref{im}, the ternary sum $p_3(x)+2p_4(y)+4p_5(z)$ is universal over $\mathbb Z$.
\\
\\
\textbf{(iii)} $p_3(x)+4p_4(y)+2p_5(z)$. Let $f,g$, and $g'$ be integral quadratic forms such that
$$
f=\langle2, 3, 96\rangle,\quad g=\begin{pmatrix}5&1&1\\1&11&-1\\1&-1&11\end{pmatrix},\quad \text{and} \quad g'=\langle2\rangle\perp\begin{pmatrix}12&6\\6&27\end{pmatrix}.
$$
Note that $\text{gen}(f)=\{[f],[g],[g']\}$ and $S_{24,5}\subset Q(\text{gen}(f))$.
One may easily show  that $g'\prec_{4,1} g$.
Furthermore, one may easily show that  
$$
{\setlength\arraycolsep{2pt}
\begin{array}{lllllrl}
R(g,12,5)-R_f(g,12,5) = &\pm\{\!\!&(3,1,7),&(3,2,8),&(3,4,10),&(3,5,11), &~\\
~&~&(3,7,1),&(3,8,2),&(3,10,4),&(3,11,5)&\!\!\}.
\end{array}}
$$
If we define $P_1=R(g,12,5)-R_f(g,12,5)$ and  
$T_1=\begin{pmatrix}8&10&-10\\-6&-3&-9\\2&-11&-1\end{pmatrix}$, then all  conditions in Theorem \ref{maintool} are satisfied.
Since  $z_1=(0,1,1)$ is an integral primitive eigenvector of $T_1$ is and $Q(z_1)=20$, which is not of the form $24n+5$,  we have  
\begin{equation}\label{iii}
S_{24,5}\cap Q(g)\subset Q(f).
\end{equation}
Recall that every integer of the form $24n+5$ for any nonnegative integer $n$ is represented by $f,g$ or $g'$.
Assume $24n+5$ is represented by $g$. Then $f$ represents it  by \eqref{iii}. 
Assume $24n+5$ is represented by $g'$. Then it is represented by $g$ since $g'\prec_{4,1}g$. 
Furthermore, by \eqref{iii}, it is also represented by $f$.
Therefore every integer of the form $24n+5$, for any nonnegative integer $n$ is represented by $f$.  
If the equation $2X^2+3Y^2+96Z^2=24n+5$ has an integer solution $X,Y,Z$, then $X$ is relatively prime to $6$ and $Y$ is odd. Hence the equation
$$
3(2x-1)^2+96y^2+2(6z-1)^2=24n+5
$$ 
has an integer solution for any nonnegative integer $n$. Therefore the ternary sum  $p_3(x)+4p_4(y)+2p_5(z)$ is universal over $\mathbb Z$.
\\
\\
\textbf{(iv)} $p_3(x)+2p_3(y)+p_7(z)$. First, we show that for any eligible integer $8n$ of  $$
F(X,Y,Z)=X^2+5Y^2+10Z^2,
$$ 
the equation $F(X,Y,Z)=8n$ has an integral solution 
$(X,Y,Z)=(a,b,c)$ with $a\equiv b\equiv c\equiv 1 \Mod{2}$, unless $n\equiv 0 \Mod{5}$. 
Since the class number of $F$ is one, $F(X,Y,Z)=8n$ has an integer solution $(X,Y,Z)=(a,b,c)$. One may easily check that all of $a,b$, and $c$ have same parity.
 Suppose that $a,b$, and $c$ are all even.  Then $a\equiv b\Mod{4}$ and there is another representation 
 $$
F\left(\frac{a+5b+10c}{4},\frac{-a+3b-2c}{4},\frac{-a-b+2c}{4}\right)=F(a,b,c)=8n.
$$  
Since there are only finitely many representations of $8n$ by $F$  and furthermore, $T=\frac14\begin{pmatrix}1&5&10\\-1&3&-2\\-1&-1&2\end{pmatrix}$ has an infinite order, there is a positive integer $m$ such that each  component of the vector 
$T^m(a,b,c)^t=(a_m,b_m,c_m)^t$  is odd, unless $(a,b,c)$ is an eigenvector of $T$.  
Note that $(0,\mp2,\pm1)$ are the integral primitive eigenvectors of $T$ and $F(0,\mp2,\pm1)=30$.

Note that any positive integer of the form $40n+24$ for some nonnegative integer $n$ is an eligible integer of  $F$. Hence the equation $F(X,Y,Z)=40n+24$ has an integer solution $X,Y,Z$ such that  $X\equiv Y\equiv Z \equiv 1 \Mod{2}$ and $X\equiv \pm 2\Mod{5}$, which implies that    
$$
5(2x-1)^2+10(2y-1)^2+(10z-3)^2=40n+24
$$ 
has an integer solution for any nonnegative integer $n$. Therefore by \eqref{im},  the ternary sum $p_3(x)+2p_3(y)+p_7(z)$ is universal over $\mathbb{Z}$.
\\
\\
\textbf{(v)} $p_3(x)+p_4+2p_7(z)$. Let $f$ and $g$ be integral quadratic forms such that
$$
M_f=\langle2, 5, 40\rangle \quad \text{and} \quad M_g=\langle5, 8, 10\rangle.
$$
Note that $\text{gen}(f)=\{[f],[g]\}$. 
One may easily compute that 
$$
{\setlength\arraycolsep{2pt}
\begin{array}{lllllrl}
R(g,8,7)-R_f(g,8,7)=&\pm\{\!\!&(1,0,1),&(1,2,5),&(1,4,1),&(1,6,5),&~\\
~&~&(3,0,3),&(3,2,7),&(3,4,3),&(3,6,7),&~\\
~&~&(1,0,7),&(1,2,3),&(1,4,7),&(1,6,3),&~\\
~&~&(3,0,5),&(3,2,1),&(3,4,5),&(3,6,1)&\!\!\}.
\end{array}}
$$
Let
{\setlength\arraycolsep{2pt}
\begin{eqnarray*}
P_1&=&\{(v_1,v_2,v_3)\in R(g,8,7)-R_f(g,8,7)\mid v_1\equiv v_3 \Mod{4}\},\\
P_2&=&\{(v_1,v_2,v_3)\in R(g,8,7)-R_f(g,8,7)\mid v_1+v_3\equiv0 \Mod{4}\},
\end{eqnarray*}}
and
$$
T_1=\begin{pmatrix}4&8&4\\-5&2&5\\2&-4&6\end{pmatrix},
\quad T_2=\begin{pmatrix}4&8&-4\\-5&2&-5\\-2&4&6\end{pmatrix}.
$$
Then one may easily show that all conditions in Theorem \ref{maintool} are satisfied. 
Note that $z_1=(1,0,1)$ and $z_2=(1,0,-1)$ are integral primitive eigenvectors of $T_1$ and $T_2$, respectively.   
Therefore  $(S_{8,7}\cap Q(g)) - 15\mathbb{Z}^2\subset Q(f)$. 
Since $S_{40,23}\subset Q(\text{gen}(f))$, we have $S_{40,23}\subset Q(f)$. 
Note that if the equation $2X^2+5Y^2+40Z^2=40n+23$ has an integer solution $X,Y,Z$, then $X,Y$ are odd and $X\equiv \pm2 \Mod{5}$.  Hence the equation
$$
5(2x+1)^2+40y^2+2(10z-3)^2=40n+23
$$
has an integer solution for any nonnegative integer $n$, which implies that the ternary sum  $p_3(x)+p_4+2p_7(z)$ is universal over $\mathbb Z$.
\\
\\
\textbf{(vi)} $p_3(x)+p_5(y)+p_7(z)$. Let $f$ and $g$ be integral quadratic forms such that
$$
M_f=\langle5\rangle\perp\begin{pmatrix}12&-6\\-6&18\end{pmatrix}\quad \text{and} \quad M_g=\langle2, 15, 30\rangle.
$$
Note that $\text{gen}(f)=\{[f],[g]\}$.  In this case, we have 
$$
R(g,4,3)-R_f(g,4,3)=\pm\{(0,1,0),(2,1,2)\}.
$$
 We define $P_1=R(g,4,3)-R_f(g,4,3)$ and 
$T_1=\begin{pmatrix}1&0&15\\0&-4&0\\-1&0&1\end{pmatrix}$.  
Then one may easily show that all conditions in Theorem \ref{maintool} are satisfied.
Note that  $z_1=(0,1,0)$ is an integral primitive eigenvector of $T_1$ is and $Q(0,1,0)=15$. Therefore, 
$$
(S_{4,3}\cap Q(g)) - 15\mathbb{Z}^2\subset Q(f).
$$ 
Since $S_{120,47}\subset Q(\text{gen}(f))$, 
we have $S_{120,47}\subset Q(f)$.
Note that if  the equation
$$
3(Z-2X)^2+5Y^2+15Z^2=12X^2+5Y^2+18Z^2-12ZX=120n+47
$$
has an integer solution $X,Y,Z$, then $Z-2X\equiv Y\equiv Z\equiv 1 \Mod{2}$, $Y\not\equiv0 \Mod{3}$ and $Z-X\equiv \pm 2\Mod{5}$. Hence the equation
$$
15(2x-1)^2+5(6y-1)^2+3(10z-3)^2=120n+47
$$
has an integer solution for any nonnegative integer $n$, which implies that the ternary sum  $p_3(x)+p_5(y)+p_7(z)$ is universal over $\mathbb Z$ by \eqref{im}. 
\\
\\
\textbf{(vii)} $p_3(x)+2p_5(y)+p_7(z)$. Let $f$ and $g$ be integral quadratic forms such that
$$
M_f=\langle5\rangle\perp\begin{pmatrix}9&3\\3&21\end{pmatrix}\quad\text{and}\quad M_g=\begin{pmatrix}6&3\\3&9\end{pmatrix}\perp\langle20\rangle.
$$
Note that $\text{gen}(f)=\{[f],[g]\}$. One may easily show by a direct computation that $g\prec_{15,11} f$.
Since $S_{60,26} \subset Q(\text{gen}(f))$, we have $S_{60,26}\subset Q(f)$. Now consider the equation
\begin{eqnarray*}
120n+52&=&3(Z-4X)^2+10Y^2+15Z^2\\ 
&=&48X^2+18Z^2+10Y^2-24XZ=f'(X,Y,Z).
\end{eqnarray*}
Since $f'$ is isometric to $2f$, the above equation has an integer solution $X,Y,Z$ for any nonnegative integer $n$. 
Furthermore, since $Z-4X\equiv Y\equiv Z\equiv 1 \Mod{2}$, $Y\not\equiv 0 \Mod{3}$, and $Z-4X\equiv \pm 2 \Mod{5}$, the equation
$$
15(2x-1)^2+10(6y-1)^2+3(10z-3)^2=120n+52
$$
has an integer solution for any nonnegative integer $n$. Therefore the ternary sum $p_3(x)+2p_5(y)+p_7(z)$ is universal over $\mathbb Z$.
\\
\\
\textbf{(viii)} $p_3(x)+p_7(y)+2p_7(z)$. Let $f$ and $g$ be integral quadratic forms such that 
$$
M_f=\begin{pmatrix}3&0&1\\0&5&0\\1&0&17\end{pmatrix}\quad\text{and}\quad M_g=\langle2, 5, 25\rangle.
$$
Note that $\text{gen}(f)=\{[f],[g]\}$. One may easily show by  a direct computation that $g\prec_{8,0}f$.
Since $S_{40,32}\subset Q(\text{gen}(f))$, we have  $S_{40,32}\subset Q(f)$.
Now consider the equation
\begin{eqnarray*}
40n+32&=&(Y-5X)^2+2Y^2+5Z^2\\
&=&25X^2+3Y^2+5Z^2-10XY=f'(X,Y,Z).
\end{eqnarray*}
Since $f'$ is isometric to $f$, the above equation has an integer solution for any nonnegative integer $n$.
Therefore the equation $f''(X,Y,Z)=X^2+2Y^2+5Z^2=40n+32$ has an integer solution $X,Y,Z$ with $X\equiv Y\Mod{5}$. Furthermore all of $X,Y$, and $Z$ have same parity. 
Suppose that  $(x,y,z)=(a,b,c)$  is an integer solution of  $f''(X,Y,Z)=40n+32$ such that  $a\equiv b \Mod{5}$ and $a\equiv b\equiv c\equiv 0 \Mod{2}$. Then $a\equiv c \Mod{4}$ and there is another representation 
$$
f''\left(\frac{-3a+2b+5c}{4},\frac{a-2b+5c}{4},\frac{-a-2b-c}{4}\right)=f''(a,b,c)=40n+32
$$
 satisfying
$$
\frac{-3a+2b+5c}{4} \equiv \frac{a-2b+5c}{4} \Mod{5}.
$$
Since $T=\frac14\begin{pmatrix}-3&2&5\\1&-2&5\\-1&-2&-1\end{pmatrix}$  has an infinite order, there is a positive integer $m$ such that each component of the vector 
$T^m(a,b,c)^t=(a_m,b_m,c_m)^t$ is odd and $a_m \equiv b_m \Mod{5}$, unless $(a,b,c)$ is an eigenvector of $T$. Note that $(a,b,c)=(-2,1,0)$ is an integral primitive eigenvector of $T$ and
 $f''(-2t,t,0)=6t^2\not\equiv 2 \Mod{5}$.  Hence  for any nonnegative integer $n$, $f''(X,Y,Z)=40n+32$ has an integer solution $X,Y,Z$ such that  $X\equiv Y\Mod{5}$ and  all of $X,Y$, and $Z$ are odd. Therefore the equation
$$
5(2x-1)^2+(10y-3)^2+2(10z-3)^2=40n+32
$$
has an integer solution for any nonnegative integer $n$, which implies that  the ternary sum  $p_3(x)+p_7(y)+2p_7(z)$ is universal over $\mathbb Z$ by \eqref{im}.
\\
\\
\textbf{(ix)} $p_3(x)+2p_3(y)+2p_8(z)$. Let $f$ and $g$ be integral quadratic forms such that
$$
M_f=\langle3, 6, 16\rangle\quad\text{and}\quad M_g=\langle1\rangle\perp\begin{pmatrix}18&6\\6&18\end{pmatrix}.
$$
Note that $\text{gen}(f)=\{[f],[g]\}$.  By a direct computation, we  may easily check that  $R(g,3,1)-R_f(g,3,1)=\pm\{(1,0,0)\}$.
We define 
$$
P_1=\pm\{(1,0,0)\} \quad  \text{and} \quad T_1=\begin{pmatrix}3&0&0\\0&-2&-3\\0&3&0\end{pmatrix}.
$$ 
Then one may easily show that all conditions in Theorem \ref{maintool} are satisfied.
Note that $z_1=(1,0,0)$ is an integral primitive eigenvector of $T_1$ is  and $Q(1,0,0)=1$.  Since $S_{24,1} \subset Q(\text{gen}(f))$, we have $S_{24,1}  \subset Q(f)$, unless $24n+1$ is a square of an integer. 
Since $g$ is contained in the spinor genus of $f$ and $1$ is represented by $g$, every square of an integer that has a prime divisor bigger than $3$  is represented by both $f$ and $g$ by Lemma  \ref{spinor}. 
Therefore every integer of the form $24n+1$ for some positive integer $n$ is represented by $f$. 
If  the equation $3X^2+6Y^2+16Z^2=24n+25$ has an integer solution $X,Y,Z$,  then $X,Y$ are odd and $Z$ is not divisible by 3. Hence the equation
$$
3(2x-1)^2+6(2y-1)^2+16(3z-1)^2=24n+25
$$ 
 has an integer solution for any nonnegative integer $n$, which implies that the ternary sum  $p_3(x)+2p_3(y)+2p_8(z)$ is universal over $\mathbb Z$ by \eqref{im}.
\\
\\
\textbf{(x)} $p_3(x)+p_7(y)+p_8(z)$. Let $f$ and $g$ be integral quadratic forms such that
$$
M_f=\begin{pmatrix}6&3\\3&9\end{pmatrix}\perp\langle20\rangle\quad\text{and} \quad M_g=\langle5\rangle\perp\begin{pmatrix}9&3\\3&21\end{pmatrix}.
$$
Note that $\text {gen}(f)=\{[f],[g]\}$. One may easily compute that $g\prec_{30,11}f$. Since
$S_{60,41}\subset Q(\text{gen}(f))$, we  have $S_{60,41}\subset Q(f)$.
Hence the equation 
$$
120n+82=3(Y-2X)^2+15Y^2+40Z^2=12X^2+18Y^2+40Z^2-12XY
$$
has an integer solution $X,Y,Z$ for any nonnegative integer $n$. Since $Y-2X\equiv Y\equiv 1\Mod{2}$, $Y-2X\equiv \pm 2\Mod{5}$, and $Z\not\equiv 0 \Mod{3}$, the equation
$$
15(2x-1)^2+3(10y-3)^2+40(3z-1)^2=120n+82 
$$
has an integer solution for any nonnegative integer $n$. Therefore by \eqref{im}, the ternary sum  $p_3(x)+p_7(y)+p_8(z)$ is universal over $\mathbb Z$.
\\
\\
\textbf{(xi)} $p_3(x)+2p_3(y)+p_9(z)$. Let $f$ and $g$ be integral quadratic forms such that
$$
M_f=\langle1\rangle\perp\begin{pmatrix}21&14\\14&28\end{pmatrix},\quad M_g=\langle1, 7, 14\rangle,\quad{and}\quad M_{g'}=\langle2, 7, 7\rangle.
$$
Note that $\text{gen}(g)=\{[g],[g']\}$. One may easily show by a direct computation that $g'\prec_{8,6} g$. Since 
$S_{56,46}\subset Q(\text{gen}(g))$, we have $S_{56,46}\subset Q(g)$.
 Now, one may show by a direct computation that  
 $$
 R(g,8,6)-R_f(g,8,6)=\pm\{(0,0,1),(0,0,3),(4,4,1),(4,4,3)\}.
 $$
We define $P_1= R(g,8,6)-R_f(g,8,6)$ and $T_1=\begin{pmatrix}1&21&0\\-3&1&0\\0&0&-8\end{pmatrix}$.  
Then one may easily show that all conditions in Theorem \ref{maintool} are satisfied.
Note that $z_1=(0,0,1)$ is an integral primitive eigenvector of $T_1$ and $Q(0,0,1)=14$.  
Since we know $(S_{8,6}\cap Q(g)) - 14\mathbb{Z}^2\subset Q(f)$ by Theorem \ref{maintool},  we have $S_{56,46} \subset Q(f)$.
Hence the equation
$$
56n+46=X^2+7(Z-2Y)^2+14Z^2=X^2+28Y^2+21Z^2-28YZ
$$
 has an integer solution $X,Y,Z$ for any nonnegative integer $n$.
Since $X\equiv Z-2Y\equiv Z\equiv 1\Mod{2}$ and $X\equiv \pm 2 \Mod{7}$, the equation
$$
7(2x-1)^2+14(2y-1)^2+(14z-5)^2=56n+46
$$
has an integer solution for any nonnegative integer $n$,  which implies that the ternary sum  $p_3(x)+2p_3(y)+p_9(z)$ is universal over $\mathbb Z$ by \eqref{im}.
\\
\\
\textbf{(xii)} $p_3(x)+p_4(y)+p_9(z)$. Let $f$ and $g$ be integral quadratic forms such that
$$
M_f=\begin{pmatrix}2&1\\1&4\end{pmatrix}\perp\langle28\rangle\quad\text{and}\quad 
M_g= \begin{pmatrix}4&2&2\\2&8&1\\2&1&8\end{pmatrix}.
$$
Note that $\text{gen}(f)=\{[f],[g]\}$. One may easily show by a direct computation that $g\prec_{4,0}f$. Since $S_{28,16}\subset Q(\text{gen}(f))$, we have $S_{28,16}\subset Q(f)$. 
 Hence the equation
$$
56n+32=(Y+2X)^2+7Y^2+56Z^2=4X^2+8Y^2+56Z^2+4XY
$$
has an integer solution for any nonnegative integer $n$. Therefore the equation $f'(X,Y,Z)=X^2+7Y^2+56Z^2=56n+32$ has an integer solution $a,b,c$ with $a\equiv b \Mod{2}$. Suppose that  $a,b,c$ is an integer solution of  $f'(x,y,z)=56n+32$  satisfying $a\equiv b\equiv 0 \Mod{2}$. Note that $a\equiv b\Mod4$ and 
$$ 
f'\left(\frac{3a-7b}{4},\frac{a+3b}{4},c\right)=f'(a,b,c).
$$
Since $T=\frac14 \begin{pmatrix}3&-7&0\\1&3&0\\0&0&4\end{pmatrix}$ has an infinite order,  there is  a positive integer $m$ such that  $a_m \equiv b_m \equiv 1 \Mod{2}$, where $T^m(a,b,c)^t=(a_m,b_m,c_m)^t$, unless $(a,b,c)$ is an eigenvector of $T$.
 Since $(0,0,1)$ is an integral primitive eigenvector of $T$ and $g(0,0,t)=56t^2\equiv 0\Mod{7}$, the equation $f'(X,Y,Z)=56n+32$ has an integer solution $X,Y,Z$ such that $X\equiv Y\equiv 1 \Mod{2}$ and $X\equiv \pm 2\Mod{7}$, for any nonnegative integer $n$. Hence the equation
$$
7(2x-1)+56y^2+(14z-5)^2=56n+32
$$
has an integer solution for any nonnegative integer $n$. Therefore by \eqref{im}, the ternary sum $p_3(x)+p_4(y)+p_9(z)$ is universal over $\mathbb Z$.
\\
\\
\textbf{(xiii)} $p_3(x)+p_5(y)+2p_{9}(z)$.  This case is one of the most complicate one. Let $f,g$, and $g'$ be integral quadratic forms such that
$$
M_f=\begin{pmatrix}13&1\\1&13\end{pmatrix}\perp\langle21\rangle,\quad
M_g=\begin{pmatrix}10&5&2\\5&13&1\\2&1&34\end{pmatrix},\quad\text{and}\quad
M_{g'}=\begin{pmatrix}10&4&-3\\4&10&3\\-3&3&45\end{pmatrix}.
$$
Note that $\text{gen}(f)=\{[f],[g],[g']\}$ and $S_{168,10} \subset Q(\text{gen}(f))$. One may easily show that  $g'\prec_{21,10}g$. 
Furthermore, One may easily show by  a direct computation that the set $R(g,21,10)-R_f(g,21,10)$ consists of $48$ vectors, which is the union of the following sets:
{\setlength\arraycolsep{2pt}
\begin{eqnarray*}
P_1&=&\pm \{ (4,0,17),  (10,0,11) \},\\
P_2&=&\pm\{ (2,17,19), (5,11,16) \},\\
P_3&=&\pm \{ (2,11,13), (5,17,1) \},\\
P_4&=&\pm\{ (1,0,0), (1,12,9), (8,0,0), (8,12,9) \},\\
\widetilde{P_1}&=&\pm \{ (2,9,1),(2,17,2),(4,12,8),(5,11,5),\\
& &\hskip 3.15cm(5,12,13),(10,9,20)\},\\
\widetilde{P_2}&=&\pm \{ (1,0,18), (1,4,1), (1,12,12), (2,0,13),\\
& &\hskip 3.15cm (5,0,1), (8,11,8), (8,12,12), (13,0,3) \}.\\
\end{eqnarray*}}
We also define
$$
\begin{small}
T_1=\begin{pmatrix}-14&-14&-35\\14&-7&14\\-7&-7&14\end{pmatrix},~
T_2=\begin{pmatrix}-21&-7&14\\6&-17&-2\\6&4&19\end{pmatrix},~
T_3=\begin{pmatrix}-15&-1&-34\\-6&1&34\\6&13&1\end{pmatrix},
\end{small}
$$
and
$$
\begin{small}
T_4=\begin{pmatrix}-21&-21&0\\0&19&-16\\0&5&19\end{pmatrix}, \qquad
T_5=\begin{pmatrix}-21&-21&0\\6&15&30\\6&-6&9\end{pmatrix}.
\end{small}
$$

Note that each $T_i$ satisfies the condition (i),(ii) of Theorem \ref{maintool}.
One may easily show that $P_1$ and $T_1$ satisfy the condition (iii) of Theorem \ref{maintool}.
For any $v\in\mathbb{Z}^3$ such that  $v\Mod{21}\in P_2$, One may show by a direct computation that 
$\frac{1}{21}\cdot vT_2^t\in\mathbb{Z}^3$ and if $\frac{1}{21}\cdot vT_2^t\Mod{21} \not \in R_f(g,21,10)$, then 
$$
\left(\frac{1}{21}\right)^2\cdot vT_2^tT_1^t\in\mathbb{Z}^3\quad  \text{and}  \quad \left(\frac{1}{21}\right)^2\cdot vT_2^tT_1^t \Mod{21} \in R_f(g,21,10)\cup P_2.
$$
For example, assume that $v=(v_1,v_2,v_3)\equiv (2,17,19) \Mod{21}$. Then $\frac{1}{21}\cdot vT_2^t$ is either a good vector or congruent to $(19,4,19)$ modulo $21$. If the latter holds, then 
$\left(\frac{1}{21}\right)^2\cdot vT_2^tT_1^t$ is either a good vector or congruent to $(2,17,19)$ modulo $21$. Note that the order of $T_1T_2$ is infinite. 

For any $v'\in\mathbb{Z}^3$ such that $v'\Mod{21}\in P_3$,  One may show by a direct computation that $\frac{1}{21}\cdot v'T_3^t\in\mathbb{Z}^3$ and if $\frac{1}{21}\cdot v'T_3^t\Mod{21}\not\in R_f(g,21,10) \cup P_2$, then 
$$
\left(\frac{1}{21}\right)^2\cdot v'T_3^tT_1^t\in\mathbb{Z}^3 \quad \text{and} \quad \left(\frac{1}{21}\right)^2\cdot v'T_3^tT_1^t\Mod{21}\in R_f(g,21,10)\cup P_3.
$$  
Note that the order of $T_1 T_3$ is infinite. 
Furthermore, for any $v''\in\mathbb{Z}^3$ such that $v''\Mod{21}\in P_4$, one may easily check  that 
$$
\frac{1}{21}\cdot v''T_4^t\in\mathbb{Z}^3 \quad \text{and} \quad \frac{1}{21}\cdot v''T_4^t\Mod{21}\in  R_f(g,21,10)\cup P_4 \cup \widetilde{P_2}.
$$
Note that  for any $\widetilde{v_1}\in\mathbb{Z}^3$ such that $\widetilde{v_1}\Mod{21}\in \widetilde{P_1}$, 
$$
\frac{1}{21}\cdot \widetilde{v_1}T_1^t \in\mathbb{Z}^3 \quad \text{and} \quad \frac{1}{21}\cdot\widetilde{v_1}T_1^t\Mod{21}\in R_f(g,21,10)\cup P_2.
$$
For $\widetilde{v_2}\in\mathbb{Z}^3$ such that $\widetilde{v_2}\Mod{21}\in\widetilde{P_2}$, then one of vectors in 
 $$
 \left\{\frac{1}{21}\cdot\widetilde{v_2}T_1^t,~\frac{1}{21}\cdot\widetilde{v_2} T_5^t,~\left(\frac{1}{21}\right)^2\cdot\widetilde{v_2}T_5^tT_1^t, ~
\left(\frac{1}{21}\right)^2\cdot\widetilde{v_2}(T_1^t)^2, ~\left(\frac{1}{21}\right)^3\cdot\widetilde{v_2}T_5^t(T_1^t)^2\right\}
 $$ 
 is integral  and contained in $R_f(g,21,10)\cup P_3$ modulo $21$.  
Therefore, though the situation is slightly different from the theorem, we may still apply Theorem \ref{maintool} to this case. 

Note that $(-1,0,1),(-1,2,1),(5,-4,1)$, and $(1,0,0)$ are integral primitive eigenvectors of $T_1,T_1T_2,T_1T_3$, and $T_4$, respectively. Note that 
$$
Q(-1,0,1)=40, \  \  Q(-1,2,1)=76,  \  \  Q(5,-4,1)=304, \quad \text{and} \quad Q(1,0,0)=10.
$$ 
Therefore if $168n+10$ is not of the form $10t^2$ for some positive integer $t$, then $168n+10$ is represented by $f$.  
Assume that $168n+10=10t^2$ for some positive integer $t>1$.  Then $t$ has a prime divisor relatively prime to $2\cdot3\cdot7$.  
 Since $g,g'\in\text{spn}(f)$,  and both $g$ and $g'$ represent $10$, $f$ represents $10t^2$ by Lemma \ref{spinor}.
Therefore every integer of the form $168n+10$ for a positive integer $n$ is represented by $f$. Hence the equation
$$
168n+178=6X^2+7(2Y-X)^2+21Z^2=13X^2+28Y^2+21Z^2-28XY
$$
 has an integer solution $X,Y,Z$ for any nonnegative integer $n$.
Since $X\equiv 2Y-X\equiv Z\equiv 1 \Mod{2}$, $2Y-X\not\equiv0 \Mod{3}$, and $X\equiv \pm2 \Mod{7}$, the equation
$$
21(2x-1)^2+7(6y-1)^2+6(14z-5)^2=168n+178
$$
has an integer solution for any nonnegative integer $n$, which implies that  the ternary sum $p_3(x)+p_5(y)+2p_{9}(z)$ is universal over $\mathbb Z$.
\\
\\
\textbf{(xiv)} $2p_3(x)+p_5(y)+p_9(z)$. Let $f,g$, and $g'$ be integral quadratic forms such that
$$
M_f=\begin{pmatrix}5&1\\1&17\end{pmatrix}\perp\langle21\rangle, \quad M_g=\begin{pmatrix}12&0&6\\0&14&7\\6&7&17\end{pmatrix}, \quad\text{and}\quad M_{g'}=\begin{pmatrix}5&2\\2&5\end{pmatrix}\perp\langle84\rangle.
$$
Note that $\text{gen}(f)=\{[f],[g],[g']\}$. One may easily show by a direct computation that $g\prec_{21,20}f$ and $g'\prec_{21,20}f$. Since $S_{84,62}\subset Q(\text{gen}(f))$, we have $S_{84,62}\subset Q(f)$.
Now consider the equation
\begin{eqnarray*}
168n+124&=&3(Y-4X)^2+7Y^2+42Z^2\\
&=&48X^2+10Y^2+42Z^2-24XY=f'(X,Y,Z).
\end{eqnarray*}
Since $f'$ is isometric to $2f$, the above equation has an integer solution $X,Y,Z$ for any nonnegative integer $n$. Since $Y\equiv Y-4X \Mod{4}$, $Y-4X$, $Y$, and $Z$ are all odd. Furthermore, $Y-4X\equiv \pm 2\Mod{7}$ and $Y\not\equiv 0 \Mod{3}$. Therefore the equation
$$
42(2x-1)^2+7(6y-1)^2+3(14z-5)^2=168n+124
$$
has an integer solution for any nonnegative integer $n$, which implies that the ternary sum $2p_3(x)+p_5(y)+p_9(z)$ is universal over $\mathbb Z$.
\\
\\
\textbf{(xv)} $p_3(x)+p_3(y)+p_{12}(z)$.  Let $f$ and $g$ be integral quadratic forms such that
$$
M_f=\langle4, 5, 5\rangle\quad\text{and}\quad M_g=\langle1, 5, 20\rangle.
$$
Note that $\text{gen}(f)=\{[f],[g]\}$ and $S_{20,1} \subset Q(\text{gen}(f))$.  
One may easily compute that 
$$
{\setlength\arraycolsep{2pt}
\begin{array}{lllllrl}
R(g,20,1)-R_f(g,20,1)=&\pm\{\!\!&(1,0,0),&(1,0,5),&(1,0,10),&(1,0,15),&\\ 
~&~&(1,10,0),&(1,10,5),&(1,10,10),&(1,10,15),&\\
~&~&(9,0,0),&(9,0,5),&(9,0,10),&(9,0,15),&\\
~&~&(9,10,0),&(9,10,5),&(9,10,10),&(9,10,15)&\!\!\}.
\end{array}}
$$
We define $P_1=R(g,20,1)-R_f(g,20,1)$ and 
$T_1=\begin{pmatrix}20&0&0\\0&-12&32\\0&-8&-12\end{pmatrix}$. 
Then one may easily show that all conditions in Theorem \ref{maintool} are satisfied.
Note that  $z_1=(1,0,0)$ is an integral primitive eigenvector of $T_1$ and $Q(1,0,0)=1$. Therefore if $20n+1$ is not of the form $t^2$ for a positive integer $t$, then it is represented by $f$.
Since $g$ is contained in the spinor genus of $f$ and $1$ is represented by $g$, every square of an integer that has a prime divisor relatively prime to $10$  is represented by $f$ by Lemma \ref{spinor}.  Therefore every integer of the form $20n+1$ for a positive integer $n$ is represented by $f$. Now consider the equation 
\begin{eqnarray*}
40n+42&=&5(Y-2X)^2+5Y^2+8Z^2\\
&=&20X^2+10Y^2+8Z^2-20XY=f'(X,Y,Z).
\end{eqnarray*}
Since $f'$ is isometric to $2f$, the above equation has an integer solution $X,Y,Z$ for any nonnegative integer $n$.
Since $Y-2X\equiv Y\equiv 1 \Mod{2}$ and $Z\equiv\pm2\Mod{5}$, the equation
$$
5(2x-1)^2+5(2y-1)^2+8(5z-2)^2=40n+42
$$
has an integer solution for any nonnegative integer $n$, which implies that the ternary sum $p_3(x)+p_3(y)+p_{12}(z)$ is universal over $\mathbb Z$.
\\
\\
\textbf{(xvi)} $p_3(x)+2p_3(y)+p_{12}(z)$. Let $f$ and $g$ be integral quadratic forms such that 
$$
M_f=\langle5, 8, 10\rangle\quad\text{and}\quad M_g=\langle2, 5, 40\rangle.
$$
Note that $\text{gen}(f)=\{[f],[g]\}$ and  $S_{40,7} \subset Q(\text{gen}(f))$.   One may show by a direct computation that 
$$
{\setlength\arraycolsep{2pt}
\begin{array}{lllllrl}
R(g,8,7)-R_f(g,8,7)=&\pm\{\!\!&(1,1,0),&(1,1,4),&(1,5,2),&(1,5,6),&\\
~&~&(3,3,0),&(3,3,4),&(3,7,2),&(3,7,6),&\\
~&~&(1,3,2),&(1,3,6),&(1,7,0),&(1,7,4),&\\
 ~&~&(3,1,2),&(3,1,6),&(3,5,0),&(3,5,4)&\!\!\}.
\end{array}}
$$
Let
{\setlength\arraycolsep{2pt}
\begin{eqnarray*}
P_1&=&\{(v_1,v_2,v_3)\in R(g,8,7)-R_f(g,8,7)\mid v_1-v_2+2v_3\equiv0 \Mod{8}\},\\
P_2&=&\{(v_1,v_2,v_3)\in R(g,8,7)-R_f(g,8,7)\mid v_1+v_2+2v_3\equiv0 \Mod{8}\},
\end{eqnarray*}}
and
$$
T_1=\begin{pmatrix}2&-10&20\\-4&-4&-8\\-1&1&6\end{pmatrix}, \quad
T_2=\begin{pmatrix}2&10&20\\4&-4&8\\-1&-1&6\end{pmatrix}.
$$
Then one may easily show that all conditions in Theorem \ref{maintool} are satisfied.
Note that $z_1=(1,1,0)$ and $z_2=(1,-1,0)$ are integral primitive eigenvectors for $T_1, T_2$, respectively and $Q(1,1,0)=Q(1,-1,0)=7$. 
Therefore if $40n+7$ is not of the form $7t^2$ for some positive integer $t$, then $40n+7$ is represented by $f$.

Assume that there is a positive integer $t>1$ such that $40n+7=7t^2$.  Then $t$ has a prime divisor relatively prime to $10$. 
Since $g\in\text{spn}(f)$  and $g$ represents $7$, $f$ represents $7t^2$ for any $t>1$ by Lemma \ref{spinor}.
Therefore every integer of the form $40n+7$ for a positive integer $n$ is represented by $f$. Hence the equation $5X^2+8Y^2+10Z^2=40n+47$ has an integer solution $X,Y,Z$ for any nonnegative integer $n$. Since $X\equiv Z\equiv 1 \Mod{2}$ and $Y\equiv \pm2\Mod{5}$, the equation
$$
5(2x-1)^2+10(2y-1)^2+8(5z-2)^2=40n+47
$$
has an integer solution for any nonnegative integer $n$. Therefore the ternary sum  $p_3(x)+2p_3(y)+p_{12}(z)$ is universal over $\mathbb Z$ by \eqref{im}.
\\
\\
\textbf{(xvii)} $p_3(x)+p_5(y)+p_{13}(z)$.  Let $f$ and $g$ be integral quadratic forms such that
$$
M_f=\begin{pmatrix}12&0&6\\0&44&22\\6&22&47\end{pmatrix},\quad M_g=\begin{pmatrix}15&6&3\\6&20&10\\3&10&71\end{pmatrix},\quad\text{and}\quad
M_{g'}=\begin{pmatrix}23&1&6\\1&23&6\\6&6&36\end{pmatrix}.
$$
Note that $\text{gen}(f)=\{[f],[g],[g']\}$ and $S_{264,23} \subset Q(\text{gen}(f))$. One may easily show by a direct computation that $g'\prec_{12,11}g$. 
Furthermore, one may easily compute that
$$
{\setlength\arraycolsep{2pt}
\begin{array}{lllllrl}
R(g,12,11)-R_f(g,12,11)=&\pm\{\!\!&(1,11,0),&(5,7,0),&(2,1,11),&(2,7,5),&\\
~&~&(4,5,1),&(4,11,7),&(3,2,0),&(3,10,0),&\\
~&~&(4,6,5),&(4,6,11),&(2,0,1),&(2,0,7)&\!\!\}.
\end{array}}
$$
We define a partition of $R(g,12,11)-R_f(g,12,11)$ as follows: 
\begin{eqnarray*}
P_1&=&\{(v_1,v_2,v_3)\mid v_1+v_2+v_3\equiv0 \Mod{12} \},\\
P_2&=&\{(v_1,v_2,v_3)\mid v_1+v_2+v_3\equiv2,10 \Mod{12}\},\\
\widetilde{P_1}&=&\{(v_1,v_2,v_3)\mid v_1+v_2+v_3\equiv1 \Mod{2}, \  \  v_2 \equiv 2 \Mod{4}\},\\
\widetilde{P_2}&=&\{(v_1,v_2,v_3)\mid v_1+v_2+v_3\equiv1 \Mod{2}, \  \  v_2 \equiv 0 \Mod{4}\}.
\end{eqnarray*}
We also define
$$
\begin{small}
T_1=\begin{pmatrix}-2&-14&-14\\-10&2&2\\2&2&-10\end{pmatrix},~
T_2=\begin{pmatrix}-8&-14&-6\\4&4&24\\4&-2&-6\end{pmatrix},~
\widetilde{T_1}=\begin{pmatrix}-12&-6&-12\\0&6&24\\0&-6&0\end{pmatrix},
\end{small}
$$
and $\widetilde{T_2}=\frac{1}{12}\cdot\widetilde{T_1}^2$.
Then one may easily show that all conditions in Theorem \ref{maintool} are satisfied.
Note that  $z_1=(-1,1,0)$ , $z_2=(-2,-1,1)$ are integral primitive eigenvectors for $T_1, T_2$, respectively,  and $Q(z_1)=23$, $Q(z_2)=143$.
Therefore if $264n+23$ is not of the form $23t^2$ for a positive integer $t$, then $264n+23$ is represented by $f$.

Assume that $264n+23=23t^2$ for some positive integer $t$ greater than $1$. Then $t$ has a prime divisor relatively prime to $2\cdot3\cdot11$.
Since $g,g'\in\text{spn}(f)$  and both $g$ and $g'$ represent $23$, $f$ represents $23t^2$ by Lemma \ref{spinor}.
Therefore every integer of the form $264n+23$ for a positive integer $n$ is represented by $f$. Hence the equation
$$
264n+287=3(Z-2X)^2+11(Z-2Y)^2+33Z^2=12X^2+44Y^2+47Z^2-12XZ-44YZ
$$
 has an integer solution $X,Y,Z$ for any nonnegative integer $n$. Since $Z-2X\equiv Z-2Y\equiv  Z\equiv 1\Mod{2}$, $Z-2X\equiv \pm2\Mod{11}$, and $Z-2Y\not\equiv0\Mod{3}$, the equation
$$
33(2x-1)^2+11(6y-1)^2+3(22z-9)^2=264n+287
$$ 
has an integer solution for any nonnegative integer $n$. Therefore the ternary sum $p_3(x)+p_5(y)+p_{13}(z)$ is universal over $\mathbb{Z}$ by \eqref{im}.

%
%


\begin{thebibliography}{0}

\bibitem {bh}  J. W. Benham and J. S.  Hsia,  
On spinor exceptional representations,  
{\it Nagoya Math. J.}  \textbf{87}(1982), 247-260. 


\bibitem {fz} F. Ge and Z. W. Sun,
On some universal sums of generalized polygonal numbers,
{\it Colloq. Math.} \textbf{145}(2016), 149-155.

\bibitem {gps} S. Guo, H. Pan, and Z. W. Sun, 
Mixed sums of squares and triangular numbers {\rm (II)},  
{\it Integers} \textbf{7}(2007), $\#$A56, 5pp (electronic).

\bibitem {jko} Jangwon Ju, Kyoungmin Kim, and Beong-Kweon Oh, 
Spinor representations of ternary quadratic forms, 
{\it Int. J. Number Theory} \textbf{14}(2018), 581-594. 


\bibitem {ki} Y. Kitaoka, 
{\it Arithmetic of quadratic forms} 
(Cambridge University Press, 1993.)

\bibitem {regular} B.-K. Oh, 
Regular positive ternary quadratic forms, 
{\it Acta Arith.} \textbf{147}(2011), 233-243. 

\bibitem {pentagonal}  B.-K. Oh, 
Ternary universal sums of generalized pentagonal numbers,  
{\it J. Korean Math. Soc.} \textbf{48}(2011), 837-847.

\bibitem {os} B.-K. Oh and Z. W. Sun, 
Mixed sums of squares and triangular numbers {\rm (III)}, 
{\it J. Number Theory} \textbf{129}(2009), 964-969. 

\bibitem  {om} O. T. O'Meara, 
{\it Introduction to quadratic forms}
(Springer Verlag, New York, 1963).

\bibitem {s1} Z. W. Sun, 
Mixed sums of squares and triangular numbers, 
{\it Acta Arith.} \textbf{127}(2007), 103-113.  

\bibitem {s2} Z. W. Sun, 
On universal sums of polygonal numbers, 
{\it Sci. China Math.} \textbf{58}(2015), 1367-1396.

\bibitem {ya} T. Yang, 
An explicit formula for local densities of quadratic forms, 
{\it J. Number Theory} \textbf{72}(1998), 309-356.

\end{thebibliography}
\end{document}